\def\S{{\mathrm{S}}}
\def\D{{\mathrm{D}}}
\def\Z{{\mathbb{Z}}}
\def\fix{{\mathrm{fix}}}
\def\TSG{{\mathrm{TSG}}}
\def\Aut{{\mathrm{Aut}}}
\def\lk{{\mathrm{lk}}}
\newtheorem*{complete}{Complete Graph Theorem}
\newtheorem*{A4}{$\mathbf{A_4}$ Theorem}
\newtheorem*{A5}{$\mathbf{A_5}$ Theorem}
\newtheorem*{S4}{$\mathbf{S_4}$ Theorem}
\newtheorem*{auto-thm}{Finite Order Theorem}
\newtheorem*{N4Cycle}{4-Cycle Theorem}
\newtheorem*{subgroup}{Subgroup Theorem}
\newtheorem*{CnG}{Conway Gordon}
\newtheorem*{SFP}{Lemma 2}
\newtheorem*{OL}{Lemma 1}
\newtheorem{definition}{Definition}
\theoremstyle{mdpi}
\newcounter{thm}
\newcounter{ex}
\newcounter{re}
\address[1]{Pomona College, Claremont, CA 91711, USA}
\abstract{Topological symmetry groups were originally introduced to study the symmetries of non-rigid molecules, but have since been used to study the symmetries of any graph embedded in $\mathbb{R}^3$.  In this paper, we determine for each complete graph $K_n$ with $n\leq 6$, what groups can occur as topological symmetry groups or orientation preserving topological symmetry groups of some embedding of the graph in $\mathbb{R}^3$.}
\begin{document}


\section{Introduction}

Molecular symmetries  are important in many areas of chemistry.  Symmetry is used in interpreting results in crystallography, spectroscopy, and quantum chemistry, as well as in analyzing the electron structure of a molecule.  Symmetry is also used in designing new pharmaceutical products. But what is meant by ``symmetry'' depends on the rigidity of the molecule in question.

For small molecules which are likely to be rigid, the group of rotations, reflections, and combinations of rotations and reflections, is an effective way of representing molecular symmetries.  This group is known as the {\it point group}, of the molecule because it fixes a point of $\mathbb{R}^3$.  However, some molecules can rotate around particular bonds, and large molecules can even be quite flexible.   DNA demonstrates how flexible a long molecule can be.  Even relatively small non-rigid molecules may have symmetries which are not included in the point group and can even be achiral as a result of their ability to rotate around particular bonds.  For example, the left and right sides of the biphenyl derivative illustrated in Figure~\ref{Mislow} rotate simultaneously, independent of the central part of the molecule.  Because of these rotating pieces, this molecule is achiral though it cannot be rigidly superimposed on its mirror form.  A detailed discussion of the achirality of this molecule can be found in \cite{book}.

\begin{figure}[h!]    
\begin{center}
\includegraphics[height=2.9cm]{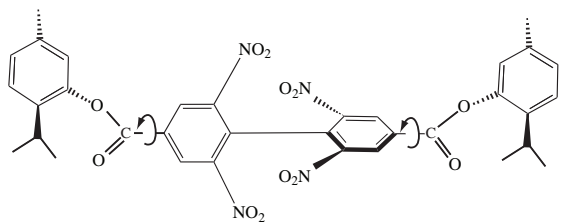}
\caption{The sides of this molecule rotate as indicated.}
\label{Mislow}
\end{center}
\end{figure}

 In general, the amount of rigidity of a given molecule depends on its chemistry not just its geometry.  Thus a purely mathematical definition of molecular symmetries that accurately reflects the behavior of all molecules is impossible.  However, for non-rigid molecules, a topological approach to classifying symmetries can add important information beyond what is obtained from the point group.  The definition of the topological symmetry group was first introduced by Jon Simon in 1987 in order to classify the symmetries of such molecules \cite{Si}.  More recently, topological symmetry groups have been used to study the symmetries of arbitrary graphs embedded in 3-dimensional space, whether or not such graphs represent molecules.  In fact, the study of symmetries of embedded graphs can be seen as a natural extension of the study of symmetries of knots which has a long history.  
 
 Though it may seem strange from the point of view of a chemist, the study of symmetries of embedded graphs as well as knots is more convenient to carry out in the 3-dimensional sphere $S^3=\mathbb{R}^3\cup \{\infty\}$ rather than in Euclidean 3-space, $\mathbb{R}^3$.  In particular, in $\mathbb{R}^3$ every rigid motion is a rotation, reflection, translation, or a combination of these operations.  Whereas, in $S^3$ glide rotations provide an additional type of rigid motion.  While a topological approach to the study of symmetries does not require us to focus on rigid motions, for the purpose of illustration it is preferable to display rigid motions rather than isotopies whenever possible.  Thus throughout the paper we work in $S^3$ rather than in $\mathbb{R}^3$.

\begin{definition} The {\bf topological symmetry group} of a graph $\Gamma$ embedded in $S^3$ is the subgroup of the automorphism group of the graph, $\Aut(\Gamma)$, induced by homeomorphisms of the pair $(S^3,\Gamma)$. The {\bf orientation preserving topological symmetry group}, $\mathrm{TSG}_+(\Gamma)$, is the subgroup of $\Aut(\Gamma)$ induced by orientation preserving homeomorphisms of $(S^3, \Gamma)$.  \end{definition}

It should be noted that for any homeomorphism $h$ of $(S^3,\Gamma)$, there is a homeomorphism $g$ of $(S^3,\Gamma)$ which fixes a point $p$ not on $\Gamma$ such that $g$ and $h$ induce the same automorphism on $\Gamma$.  By choosing $p$ to be the point at $\infty$, we can restrict $g$ to a homeomorphism of $(\mathbb{R}^3,\Gamma)$.  On the other hand if we start with an embedded graph $\Gamma$ in $\mathbb{R}^3$ and a homeomorphism $g$ of $(\mathbb{R}^3, \Gamma)$, we can consider $\Gamma$ to be embedded in $S^3=\mathbb{R}^3\cup \{\infty\}$ and extend $g$ to a homeomorphism of $S^3$ simply by fixing the point at $\infty$.  It follows that the topological symmetry group of $\Gamma$ in $S^3$ is the same as the topological symmetry group of $\Gamma$ in $\mathbb{R}^3$.  Thus we lose no information by working with graphs in $S^3$ rather than graphs in $\mathbb{R}^3$.

It was shown in \cite{TSG1} that the set of orientation preserving topological symmetry groups of 3-connected graphs embedded in $S^3$ is the same up to isomorphism as the set of finite subgroups of the group of orientation preserving diffeomorphisms of $S^3$, $\mathrm{Diff}_+(S^3)$.  However, even for a 3-connected embedded graph $\Gamma$, the automorphisms in $\mathrm{TSG}(\Gamma)$ are not necessarily induced by finite order homeomorphisms of $(S^3,\Gamma)$.  

For example, consider the embedded 3-connected graph $\Gamma$ illustrated in Figure~\ref{3connected}. The automorphism $(153426)$ is induced by a homeomorphism that slithers the graph along itself while interchanging the inner and outer knots in the graph.  On the other hand, the automorphism $(153426)$ cannot be induced by a finite order homeomorphism of $S^3$ because there is no order three homeomorphism of $S^3$ taking a figure eight knot to itself \cite{Ha, Tr} and the embedded graph in Figure~\ref{3connected} cannot be pointwise fixed by a finite order homeomorphism of $S^3$ \cite{Sm}.  

\begin{figure}[h!]    
\begin{center}
\includegraphics[height=2.9cm]{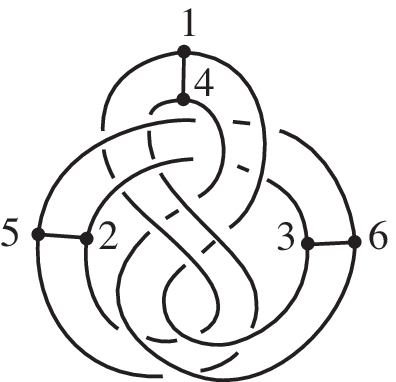}
\caption{The topological symmetry group of this embedded graph is not induced by a finite group of homeomorphisms of $S^3$.}
\label{3connected}
\end{center}
\end{figure}

On the other hand, Flapan proved the following theorem which we will make use of later in the paper.

\begin{auto-thm}
\cite{Rigidity} Let $\varphi$ be a non-trivial automorphism of a 3-connected graph $\gamma$ which is induced by a homeomorphism $h$ of $(S^3, \Gamma)$ for some embedding $\Gamma$ of $\gamma$ in $S^3$. Then for some  embedding $\Gamma'$ of $\gamma$ in $S^3$, the automorphism $\varphi$ is induced by a finite order homeomorphism, $f$ of $(S^3, \Gamma')$, and $f$ is orientation reversing if and only if $h$ is orientation reversing.
\end{auto-thm}

In the definition of the topological symmetry group, we start with a particular embedding $\Gamma$ of a graph $\gamma$ in $S^3$ and then determine the subgroup of the automorphism group of $\gamma$ which is induced by homeomorphisms of $(S^3,\Gamma)$.  However, sometimes it is more convenient to consider all possible subgroups of the automorphism group of an abstract graph, and ask which of these subgroups can be the topological symmetry group or orientation preserving topological symmetry group of some embedding of the graph in $S^3$. The following definition gives us the terminology to talk about topological symmetry groups from this point of view.

\begin{definition}
An automorphism $f$ of an abstract graph, $\gamma$, is said to be {\bf realizable} if there exists an embedding $\Gamma$ of $\gamma$ in $S^3$ such that $f$ is induced by a homeomorphism of $(S^3, \Gamma)$. A group $G$ is said to be {\bf realizable for $\mathbf{\gamma}$} if there exists an embedding $\Gamma$ of $\gamma$ in $S^3$ such that $\TSG(\Gamma) \cong G$.  If there exists an embedding $\Gamma$ such that $\TSG_+(\Gamma) \cong G$, then we say $G$ is {\bf positively realizable for $\gamma$}.
\end{definition}

It is natural to ask whether every finite group is realizable.  In fact, it was shown in \cite{TSG1} that the alternating group $A_m$ is realizable for some graph if and only if $m \leq 5$.  Furthermore, in \cite{TSG3} it was shown that for every closed, connected, orientable, irreducible 3-manifold $M$,
there exists an alternating group $A_m$ which is not isomorphic to the topological symmetry group of
 any graph embedded in $M$.


\section{\bf {Topological symmetry groups of complete graphs} }

For the special class of complete graphs $K_n$ embedded in $S^3$, Flapan, Naimi, and Tamvakis obtained the following  result.

\begin{complete}
\cite{TSG2} A finite group $H$ is isomorphic to $\TSG_+(\Gamma )$ for some embedding $\Gamma$ of a complete graph in $S^3$ if and only if $H$ is a finite subgroup of $\mathrm{SO}(3)$ or a subgroup of $\D_{m}\times \D_{m}$ for some odd $m$.
\end{complete}

However, this left open the question of what topological symmetry groups and orientation preserving topological symmetry groups are possible for embeddings of a particular complete graph $K_n$ in $S^3$. For each $n >6$, this question was answered for orientation preserving topological symmetry groups in the series of papers \cite{ChamFlap,POLY,SPA,CLAS}. 

In the current paper, we determine both the topological symmetry groups and the orientation preserving topological symmetry groups for all embeddings of $K_n$ in $S^3$ with $n\leq 6$.  Another way to state this is that we determine which groups are realizable and which groups are positively realizable for each $K_n$ with $n\leq 6$.  This is the first family of graphs for which both the realizable and the positively realizable groups have been determined.
 
 For $n\leq 3$, this question is easy to answer.  In particular, since $K_1$ is a single vertex, the only realizable or positively realizable group is the trivial group.   Since $K_2$ is a single edge, the only realizable or positively realizable group is $\mathbb{Z}_2$.  

For $n=3$, we know that $\mathrm{Aut}(K_3) \cong S_3\cong \D_3$, and hence every realizable or positively realizable group for $K_3$ must be a subgroup of $D_3$.   Note that for any embedding of $K_3$ in $S^3$, the graph can be ``slithered" along itself to obtain an automorphism of order 3 which is induced by an orientation preserving homeomorphism.  Thus the topological symmetry group and orientation preserving topological symmetry group of any embedding of $K_3$ will contain an element of order 3.  Thus neither the trivial group nor $\Z_2$ is realizable or positively realizable for $K_3$.  If $\Gamma$ is a planar embedding of $K_3$ in $S^3$, then $\TSG(\Gamma) =\TSG_+(\Gamma)\cong \D_3$.  Recall that the trefoil knot $3_1$ is chiral while the knot $8_{17}$ is negative achiral and non-invertible.  Thus if $\Gamma$ is the knot $8_{17}$, then no orientation preserving homeomorphism of $(S^3,\Gamma)$ inverts $\Gamma$, but there is an orientation reversing homeomorphism of $(S^3,\Gamma)$ which inverts $\Gamma$. Whereas, if $\Gamma$ is the knot $8_{17} \# 3_1$, then there is no homeomorphism of $(S^3, \Gamma)$ which inverts $\Gamma$.  Table 1 summarizes our results for $K_3$.

\begin{table}[h!b!p!]
\centering
\begin{tabular}{| p{4cm} | p{3cm} | p{3cm} |}
\hline
{\bf Embedding} & $\TSG(\Gamma)$ & $\TSG_+(\Gamma)$ \\ \hline
Planar & $\D_3$ & $\D_3$ \\ \hline
$8_{17}$ & $\D_3$ & $\Z_3$ \\ \hline
$8_{17}$ \# $3_1$ & $\Z_3$ & $\Z_3$ \\ \hline
\end{tabular}

\medskip

\caption{Realizable and positively realizable groups for $K_3$}
\label{table1}
\end{table}

Determining which groups are realizable and positively realizable for $K_4$, $K_5$, and $K_6$ is the main point of this paper.   In each case, we will first determine the positively realizable groups and then use the fact that either $\TSG_+(\Gamma) = \TSG(\Gamma)$ or $\TSG_+(\Gamma)$ is a normal subgroup of $\TSG(\Gamma)$ of index 2 to help us determine the realizable groups.  
\bigskip

In addition to the Complete Graph Theorem given above, we will make use of the following results in our analysis of positively realizable groups for $K_n$ with $n\geq 4$.

\begin{A4}
\cite{POLY} A complete graph $K_m$ with $m \geq 4$ has an embedding $\Gamma$ in $S^3$ such that $\TSG_+(\Gamma)\cong A_4$ if and only if $m \equiv 0$, 1, 4, 5, 8 (mod 12).
\end{A4}

\begin{A5}
\cite{POLY} A complete graph $K_m$ with $m \geq 4$ has an embedding $\Gamma$ in $S^3$ such that $\TSG_+(\Gamma)\cong A_5$ if and only if $m \equiv 0$, 1, 5, 20 (mod 60).
\end{A5}

\begin{S4}
\cite{POLY} A complete graph $K_m$ with $m \geq 4$ has an embedding $\Gamma$ in $S^3$ such that $\TSG_+(\Gamma)\cong \S_4$ if and only if $m \equiv 0$, 4, 8, 12, 20 (mod 24).
\end{S4}

\begin{subgroup} \cite{SPA}
Let $\Gamma$ be an embedding of a 3-connected graph $\gamma$ in $S^3$ with an edge that is not pointwise fixed by any non-trivial element of $\TSG_+(\Gamma)$.  Then every subgroup of $\TSG_+(\Gamma)$ is positively realizable for $\gamma$.
\end{subgroup}

\medskip

It was shown in \cite{SPA} that adding a local knot to an edge of a 3-connected graph is well-defined and that any homeomorphism of $S^3$ taking the graph to itself must take an edge with a given knot to an edge with the same knot. Furthermore, any orientation preserving homeomorphism of $S^3$ taking the graph to itself must take an edge with a given non-invertible knot to an edge with the same knot oriented in the same way.  Thus for $n>3$, adding a distinct knot to each edge of an embedding of $K_n$ in $S^3$ will create an embedding $\Delta$ where $\TSG(\Delta)$ and $\TSG_+(\Delta)$ are both trivial. Hence we do not include the trivial group in our list of realizable and positively realizable groups for $K_n$ when $n>3$.

Finally, observe that for $n>3$, for a given embedding $\Gamma$ of $K_n$ we can add identical chiral knots (whose mirror image do not occur in $\Gamma$) to every edge of $\Gamma$ to get an embedding $\Gamma'$ such that $\TSG(\Gamma') = \TSG_+(\Gamma)$. Thus every group which is positively realizable for $K_n$ is also realizable for $K_n$.  We will use this observation in the rest of our analysis.

\bigskip

\section*{\bf{Topological Symmetry Groups of $\mathbf{K_4}$}}

The following is a complete list of all the non-trivial subgroups of $\Aut(K_4) \cong \S_4$ up to isomorphism:
 $\S_4$, $A_4$, $\D_4$, $\D_3$, $\D_2$, $\Z_4$, $\Z_3$, $\Z_2$.

\begin{figure}[h!]
\begin{center}
\includegraphics[height=2.9cm]{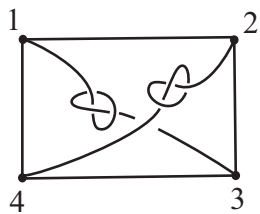}
\caption{$\TSG_+(\Gamma) \cong \D_4$.}
\label{K4D4}
\end{center}
\end{figure}

We will show that all of these groups are positively realizable, and hence all of the groups will also be realizable.  First consider the embedding $\Gamma$ of $K_4$ illustrated in Figure \ref{K4D4}. The square $\overline{1234}$ must go to itself under any homeomorphism of $(S^3, \Gamma)$.  Hence $\TSG_+(\Gamma)$ is a subgroup of $\D_4$.  In order to obtain the automorphism $(1234)$, we rotate the square $\overline{1234}$ clockwise by $90^\circ$ and pull $\overline{24}$ under $\overline{13}$.  We can obtain the transposition $(13)$ by first rotating the figure by $180^\circ$ about the axis which contains vertices 2 and 4 and then pulling $\overline{13}$ under $\overline{24}$.  Thus $\TSG_+(\Gamma)\cong\D_4$. Furthermore, since the edge $\overline{12}$ is not pointwise fixed by any non-trivial element of $\TSG_+(\Gamma)$, by the Subgroup Theorem the groups $\Z_4$, $\D_2$ and $\Z_2$ are each positively realizable for $K_4$.

Next, consider the embedding, $\Gamma$ of $K_4$ illustrated in Figure \ref{k4d3}. All homeomorphisms of $(S^3, \Gamma)$ fix vertex 4.  Hence $\TSG_+(\Gamma)$ is a subgroup of $D_3$.  The automorphism $(123)$ is induced by a rotation, and the automorphism $(12)$ is induced by turning the figure upside down and then pushing vertex 4 back up through the centre of $\overline{123}$.  Thus $\TSG_+(\Gamma) \cong \D_3$. Since the edge $\overline{12}$ is not pointwise fixed by any non-trivial element of $\TSG_+(\Gamma)$, by the Subgroup Theorem, the group $\Z_3$ is also positively realizable for $K_4$.

\begin{figure}[h!]
\begin{center}
\includegraphics[height=4.2cm]{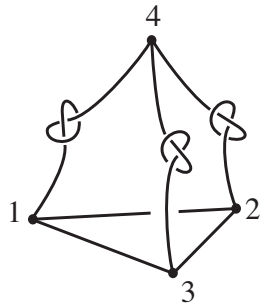}
\caption{$\TSG_+(\Gamma) \cong \D_3$.}
\label{k4d3}
\end{center}
\end{figure}

Thus every subgroup of $\Aut(K_4)$ is positively realizable.  Now by adding appropriate equivalent chiral knots to each edge, all subgroups of $\Aut(K_4)$ are also realizable.   We summarize our results for $K_4$ in Table~\ref{table2}.

\begin{table}[h!]
\centering
\begin{tabular}{| l | l | p{5cm} |}
\hline
{\bf Subgroup} & {\bf Realizable/Positively Realizable} & {\bf Reason} \\ \hline
$\S_4$ & Yes & By $\S_4$ Theorem \\ \hline
$A_4$ & Yes & By $A_4$ Theorem \\ \hline
$\D_4$ & Yes & By Figure \ref{K4D4} \\ \hline
$\D_3$ & Yes & By Figure \ref{k4d3}  \\ \hline
$\D_2$ & Yes & By Subgroup Theorem \\ \hline
$\Z_4$ & Yes & By Subgroup Theorem \\ \hline
$\Z_3$ & Yes & By Subgroup Theorem \\ \hline
$\Z_2$ & Yes & By Subgroup Theorem \\ \hline
\end{tabular}

\caption{Non-trivial realizable and positively realizable groups for $K_4$}
\label{table2}
\end{table}

\section{\bf{Topological Symmetry Groups of $\mathbf{K_5}$}}

The following is a complete list of all the non-trivial subgroups of $\Aut(K_5) \cong \S_5$:

\noindent $\S_5$, $A_5$, $\S_4$, $A_4$, $\Z_5 \rtimes \Z_4$, $\D_6$, $\D_5$, $\D_4$, $\D_3$, $\D_2$, $\Z_6$, $\Z_5$, $\Z_4$, $\Z_3$, $\Z_2$  (see \cite{gotz5} and  \cite{wiki1}).

The lemma below follows immediately from the Finite Order Theorem \cite{Rigidity} (stated in the introduction) together with Smith Theory \cite{Sm}.

\begin{OL}
Let $n > 3$ and let $\varphi$ be a non-trivial automorphism of $K_n$ which is induced by a homeomorphism $h$  of $(S^3, \Gamma)$ for some embedding $\Gamma$ of $K_n$ in $S^3$. If $h$ is orientation reversing, then $\varphi$ fixes at most 4 vertices. If $h$ is orientation preserving, then $\varphi$ fixes at most 3 vertices, and if $\varphi$ has even order, then $\varphi$ fixes at most 2 vertices. 
\end{OL}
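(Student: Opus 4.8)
The plan is to reduce to a finite order homeomorphism and then read off the fixed vertices from its fixed point set, whose topology is governed by Smith theory. First I would invoke the Finite Order Theorem: since $n>3$, the graph $K_n$ is $3$-connected, so the non-trivial automorphism $\varphi$ induced by $h$ is also induced by a finite order homeomorphism $f$ of $(S^3,\Gamma')$ for some embedding $\Gamma'$, with $f$ orientation reversing if and only if $h$ is. A vertex is fixed by $\varphi$ exactly when the corresponding point of $\Gamma'$ lies in $\fix(f)$, so it suffices to bound the number of vertices of $\Gamma'$ lying in $\fix(f)$. The structural fact I would establish at the outset is that any edge joining two fixed vertices lies entirely in $\fix(f)$: if $v,w$ are fixed then $f$ maps the arc $\overline{vw}$ to itself while fixing both endpoints, so $f$ restricts to a finite order self-homeomorphism of an interval fixing its ends, which must be the identity; thus $\overline{vw}\subseteq\fix(f)$.

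Next I would apply Smith theory to identify $\fix(f)$. If $f$ is orientation preserving then $\fix(f)$ is empty or a circle $C$; if $f$ is orientation reversing then $\fix(f)$ is empty, a pair of points, or a $2$-sphere $\Sigma$. In the orientation reversing case the bound of $4$ then follows at once: when $\fix(f)$ is empty or two points there are at most two fixed vertices, and when $\fix(f)=\Sigma$ the fixed vertices together with all edges among them (which lie in $\Sigma$ by the previous step) give an embedding of a complete graph in $S^2$, and $K_k$ is planar only for $k\le 4$. In the orientation preserving case the fixed vertices lie on the circle $C$ and every edge between two of them is a subarc of $C$; three fixed vertices already cut $C$ into the three arcs realizing their triangle, so a fourth fixed vertex would have to lie in the interior of one of those edges. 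Hence at most $3$ vertices are fixed.

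The main obstacle, and the one genuinely new ingredient, is the sharper bound of $2$ when $f$ is orientation preserving of even order; I expect this to be where the real work lies. Here I would argue by contradiction, assuming $\varphi$ fixes exactly three vertices $v_1,v_2,v_3$. By the previous paragraph these lie on the circle $C=\fix(f)$ and the three edges $\overline{v_1v_2},\overline{v_2v_3},\overline{v_3v_1}$ are precisely the three arcs into which $v_1,v_2,v_3$ divide $C$; in particular $C$ is the union of these three edges. Since $\varphi$, and hence $f$, has even order $q$, the power $g=f^{q/2}$ is a non-trivial orientation preserving involution with $\fix(f)\subseteq\fix(g)$, and as both fixed sets are circles we get $\fix(g)=C$. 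Because $n>3$ there is a vertex $v_4\notin C$, and $g(v_4)=v_5$ is a vertex distinct from $v_4$ and also off $C$. The key point is then that $g$ carries the edge $\overline{v_4v_5}$ to itself while interchanging its endpoints, so $g$ restricted to this arc is an endpoint-reversing involution of an interval and must fix an interior point; that point lies in $\fix(g)=C$. Thus $\overline{v_4v_5}$ meets $C$ in its interior, so it crosses one of $\overline{v_1v_2},\overline{v_2v_3},\overline{v_3v_1}$ at an interior point, contradicting the fact that distinct edges of an embedded graph meet only at shared vertices. This forces at most two fixed vertices and completes the argument.
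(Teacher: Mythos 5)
Your proof is correct and takes essentially the same approach as the paper: the paper gives no detailed argument for this lemma, saying only that it ``follows immediately from the Finite Order Theorem together with Smith Theory,'' and your write-up is precisely a careful elaboration of those two ingredients (pass to a finite order homeomorphism $f$, observe that any edge joining fixed vertices lies in $\fix(f)$, and read off the bounds from the possible fixed point sets $S^0$, $S^1$, $S^2$). Your half-power involution trick for the even-order bound is moreover the same device the paper itself uses in its proof of Lemma~2.
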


We now prove the following lemma.

\begin{SFP}
Let $n > 3$ and let $\Gamma$ be an embedding of $K_n$ in $S^3$ such that $\TSG_+(\Gamma)$ contains an element $\varphi$ of even order $m>2$. Then $\varphi$ does not fix any vertex or interchange any pair of vertices. \end{SFP}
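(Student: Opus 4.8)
The plan is to pass to a finite-order model via the Finite Order Theorem and then extract a contradiction from the way such a symmetry acts near its fixed set. Since $\varphi$ is induced by an orientation preserving homeomorphism, the Finite Order Theorem lets me replace $\Gamma$ by an embedding $\Gamma'$ on which $\varphi$ is induced by an orientation preserving homeomorphism $f$ of finite order, which I will take to have order exactly $m$. The only facts I need about $f$ come from Smith theory, exactly as in Lemma 1: a non-trivial orientation preserving finite-order homeomorphism of $S^3$ has fixed-point set that is either empty or a circle (in particular, never a set with non-empty interior unless the map is the identity), and near each of its fixed points it acts as a rotation on a small linking sphere.

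I would prove the ``no interchanged pair'' half first, since the ``no fixed vertex'' half reduces to it. Suppose $\varphi$ interchanges $a$ and $b$. Then $f$ carries the edge $\overline{ab}$ to itself while exchanging its endpoints, so $f$ reverses this arc and hence fixes an interior point $p$. On a small sphere linking $p$, the two ends of $\overline{ab}$ give two distinct points, neither fixed by the local rotation, that $f$ must interchange; a rotation interchanging two such points is forced to be a half-turn. Consequently $f^2$ is the identity on a neighborhood of $p$, so $\mathrm{Fix}(f^2)$ has non-empty interior; by Smith theory this forces $f^2=\mathrm{id}$, contradicting the fact that $f$ has order $m>2$.

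For the ``no fixed vertex'' half, suppose $\varphi$ fixes a vertex $v$. Then $f(v)=v$, so $\mathrm{Fix}(f)$ is a circle $C$ through $v$. Because $m$ is even, $\tau:=f^{m/2}$ is a non-trivial involution, and since $\mathrm{Fix}(f)\subseteq\mathrm{Fix}(\tau)$ with both non-empty circles, they coincide: $\mathrm{Fix}(\tau)=C$. A short look at the cycle structure of $\varphi$ shows that $\psi:=\varphi^{m/2}$ must interchange some pair $a,b$: choosing a cycle of $\varphi$ whose length $\ell$ carries the full power of $2$ dividing $m$, the element $\psi$ acts on that cycle as the shift by $\ell/2$, a product of transpositions. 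Now $\tau$ reverses the edge $\overline{ab}$ and hence fixes an interior point $p\in\overline{ab}$; but $p\in\mathrm{Fix}(\tau)=C=\mathrm{Fix}(f)$, so $f(p)=p$. Since $f(\overline{ab})=\overline{\varphi(a)\varphi(b)}$ and distinct edges of an embedded graph meet only at shared vertices, the interior point $p$ lying on both edges forces $\overline{ab}=\overline{\varphi(a)\varphi(b)}$. As $a$ and $b$ lie in a $\varphi$-cycle of length at least $2$ they are moved by $\varphi$, so this means $\varphi$ interchanges $a$ and $b$---which the first half has already ruled out.

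The main obstacle I expect is justifying the local picture rigorously: that $f$ acts as an honest rotation near each fixed point, so that reversing an edge forces a half-turn, and that Smith theory forbids a non-identity finite-order homeomorphism from fixing an open set. Both are standard consequences of the structure of finite cyclic actions on $S^3$, and I would cite them exactly as Lemma 1 does. A secondary point to nail down is that the Finite Order Theorem can be applied with $f$ of order exactly $m$, together with the clean identity $\mathrm{Fix}(f^{m/2})=\mathrm{Fix}(f)$, which is what lets me transport the fixed point $p$ of $\tau$ back to a genuine fixed point of $f$.
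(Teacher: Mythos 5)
Your second half is sound and close in spirit to the paper's argument, but your first half---on which the whole proof rests, since you reduce the fixed-vertex case to the interchanged-pair case---contains a genuine gap. The claim that a finite-order orientation-preserving homeomorphism of $S^3$ ``acts as a rotation on a small linking sphere'' near each fixed point is \emph{not} a consequence of Smith theory, and for topological finite-order homeomorphisms it can fail outright: there are finite-order orientation-preserving homeomorphisms of $S^3$ whose fixed-point circles are wildly embedded (examples go back to Bing and Montgomery--Zippin), and near a wild fixed point the action admits no invariant small sphere on which it is a rotation. The Finite Order Theorem, as stated, produces only a finite-order homeomorphism---not a smooth or locally linearizable one---so you cannot invoke this local picture ``exactly as in Lemma 1''; Lemma 1 uses only the global statement that the fixed set is empty or a circle. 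Even granting smoothness, the step ``the two ends of $\overline{ab}$ give two distinct points that $f$ must interchange'' is not rigorous: the edge is only a topological arc, its two branches can meet any small invariant sphere in complicated compact sets, and $f$ interchanging those two \emph{sets} does not produce a point $x$ off the axis with $f^2(x)=x$, which is what the half-turn conclusion requires. (A separate, minor issue: the Finite Order Theorem does not entitle you to take $f$ of order exactly $m$; this is harmless, though, because $f^{m/2}$ induces the involution $\varphi^{m/2}$ whatever the order of $f$ is, which is all you actually need.)

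The fix is already contained in your own second half, and it is exactly the paper's proof: run the Smith-theoretic argument on the whole involution $\psi=\varphi^{m/2}$ instead of reducing to a single interchanged pair. In either case of the hypothesis, $\fix(f)\neq\emptyset$ (a fixed vertex is a fixed point; an interchanged pair forces $f$ to reverse the edge $\overline{ab}$ and hence fix an interior point), so $\fix(f)\cong S^1$, and then $\fix(f^{m/2})=\fix(f)$ because a circle contained in a circle is the whole circle. Write $\psi$ as a product of disjoint transpositions $(a_1b_1)\cdots(a_qb_q)$ together with its fixed vertices. Every vertex fixed by $\psi$ lies in $\fix(f^{m/2})=\fix(f)$, hence is fixed by $\varphi$; and for each $i$, $f^{m/2}$ reverses the edge $\overline{a_ib_i}$ and so fixes an interior point $p_i$, which again lies in $\fix(f)$; since distinct edges meet only at shared vertices, $f$ preserves $\overline{a_ib_i}$, so $\varphi$ preserves $\{a_i,b_i\}$. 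Consequently $\varphi^2$ fixes every vertex, so $\varphi^2=1$, contradicting $m>2$. This disposes of both cases at once, with no local structure needed.
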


\begin{proof}  By the Finite Order Theorem, $K_n$ can be re-embedded as $\Gamma'$ so that $\varphi$ is induced on $\Gamma'$ by a finite order orientation preserving homeomorphism $h$.  Suppose that $\varphi$ fixes a vertex or interchanges a pair of vertices of $\Gamma'$. Then $\fix(h)$ is non-empty, and hence by Smith Theory, $\fix(h) \cong S^1$.  Let $r=m/2$.  Then $h^r$ induces an involution on the vertices of $\Gamma'$, and this involution can be written as a product $(a_1b_1)\dots(a_qb_q)$ of disjoint transpositions of vertices.  Now for each $i$, $h^r$ fixes a point on the edge $\overline{a_ib_i}$. But $\fix(h^r)$ contains $\fix(h)$ and thus by Smith Theory $\fix(h^r)=\fix(h)$. Hence $h$ fixes a point on each edge $\overline{a_ib_i}$. Thus $h$ induces also $(a_1b_1)\dots(a_qb_q)$ on the vertices of $\Gamma'$.   But this contradicts the hypothesis that the order of $\varphi$ is $m>2$.
\end{proof}

\medskip

By Lemma 2, there is no embedding of $K_5$ in $S^3$ such that $\TSG_+(\Gamma)$ contains an element of order 4 or of order 6. It follows that $\TSG_+(\Gamma)$ cannot be $\D_6$, $\Z_6$, $\D_4$ or $\Z_4$.

\begin{figure}[h!]
\begin{center}
\includegraphics[height=3.8cm]{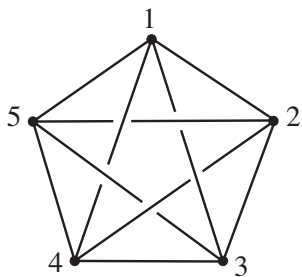}
\caption{$\TSG_+(\Gamma) \cong \D_5$.}
\label{K5D5}
\end{center}
\end{figure}

Consider the embedding $\Gamma$ of $K_5$ illustrated in Figure \ref{K5D5}.  The knotted cycle $\overline{13524}$ must be setwise invariant under every homeomorphism of $\Gamma$.  Thus $\TSG_+(\Gamma)\leq D_5$.  The automorphism $(12345)$ is induced by rotating $\Gamma$, and $(25)(34)$ is induced by turning the graph over. Hence $\TSG_+(\Gamma) = \langle (12345), (25)(34) \rangle \cong \D_5$.  Since the edge $\overline{12}$ is not pointwise fixed by any non-trivial element of $\TSG_+(\Gamma)$, by the Subgroup Theorem the groups $\Z_5$ and $\Z_2$ are also positively realizable for $K_5$.

Next consider the embedding $\Gamma$ of $K_5$ illustrated in Figure \ref{K5Z3}.  The triangle $\overline{123}$ must go to itself under any homeomorphism. Also by Lemma 1, any orientation preserving homeomorphism which fixes vertices 1, 2, and 3 induces a trivial automorphism on $K_5$. Thus $\TSG_+(\Gamma) \leq \D_3$.  The automorphism $(123)$ is induced by a rotation.  Also the automorphism $(45)(12)$ is induced by pulling vertex 4 down through the centre of triangle $\overline{123}$ while pulling vertex 5 into the centre of the figure then rotating by $180^\circ$ about the line through vertex 3 and the midpoint of the edge $\overline{12}$. Thus $\TSG_+(\Gamma) = \langle (123), (45)(12) \rangle \cong \D_3$.  Since the edge $\overline{12}$ is not pointwise fixed by any non-trivial element of $\TSG_+(\Gamma)$, by the Subgroup Theorem, the group $\Z_3$ is positively realizable for $K_5$.

\begin{figure}[h!]
\begin{center}
\includegraphics[height=3.9cm]{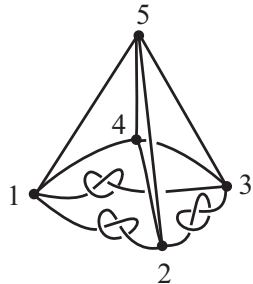}
\caption{$\TSG_+(\Gamma) \cong \D_3$.}
\label{K5Z3}
\end{center}
\end{figure}

Lastly, consider the embedding $\Gamma$ of $K_5$ illustrated in Figure \ref{K5D2} with vertex 5 at infinity. The square $\overline{1234}$ must go to itself under any homeomorphism. Hence $\TSG_+(\Gamma) \leq \D_4$.  The automorphism $(13)(24)$ is induced by rotating the square by $180^\circ$.  By turning over the figure we obtain $(12)(34)$. By Lemma 2, $\TSG_+(\Gamma)$ cannot contain an element of order 4. Thus $\TSG_+(\Gamma) = \langle (13)(24), (12)(34) \rangle \cong \D_2$.

\begin{figure}[h!]
\begin{center}
\includegraphics[height=3.3cm]{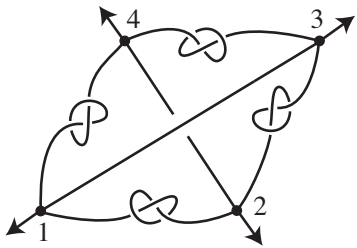}
\caption{$\TSG_+(\Gamma) \cong \D_2$.}
\label{K5D2}
\end{center}
\end{figure}

We summarize our results on positive realizability for $K_5$ in Table \ref{table3}. 

\begin{table}[h!]
\centering
\begin{tabular}{| l | l | p{7cm} |}
\hline
{\bf Subgroup} & {\bf Positively Realizable} & {\bf Reason} \\ \hline
$A_5$ & Yes & By $A_5$ Theorem \\ \hline
$\S_4$ & No & By $\S_4$ Theorem \\ \hline
$A_4$ & Yes & By $A_4$ Theorem \\ \hline
$\D_6$ & No & By Lemma 2\\ \hline
$\D_5$ & Yes & By Figure \ref{K5D5}\\ \hline
$\D_4$ & No & By Lemma 2\\ \hline
$\D_3$ & Yes & By Figure \ref{K5Z3}  \\ \hline
$\D_2$ & Yes & By Figure \ref{K5D2}  \\ \hline
$\Z_6$ & No & By Lemma 2\\ \hline
$\Z_5$ & Yes & By Subgroup Theorem \\ \hline
$\Z_4$ & No & By Lemma 2\\ \hline
$\Z_3$ & Yes & By Subgroup Theorem \\ \hline
$\Z_2$ & Yes & By Subgroup Theorem \\ \hline
\hline
\end{tabular}

\caption{Non-trivial positively realizable groups for $K_5$}
\label{table3}
\end{table}

Again by adding appropriate equivalent chiral knots to each edge, all of the positively realizable groups for $K_5$ are also realizable. Thus we only need to determine realizability for the groups $\S_5$, $\S_4$, $\Z_5 \rtimes \Z_4$, $\D_6$, $\D_4$, $\Z_6$, and $\Z_4$.

Let $\Gamma$ be the embedding of $K_5$ illustrated in Figure \ref{S5}. Any transposition which fixes vertex 5 is induced by a reflection through the plane containing the three vertices fixed by the transposition. To see that any transposition involving vertex 5 can be achieved, consider the automorphism $(15)$. Pull $\overline{51}$ through the triangle $\overline{234}$ and then turn over the embedding so that vertex $5$ is at the top, vertex 1 is in the centre and vertices 3 and 4 are switched.  Now reflect in the plane containing vertices 1, 5, and 2 in order to switch vertices 3 and 4 back.  All other transpositions involving vertex 5 can be induced by a similar sequence of moves. Hence $\TSG(\Gamma) \cong \S_5$.

\begin{figure}[h!]
\begin{center}
\includegraphics[height=3.3cm]{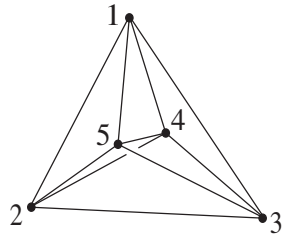}
\caption{$\TSG(\Gamma) \cong \S_5$.}
\label{S5}
\end{center}
\end{figure}

 We create a new embedding $\Gamma'$ from Figure \ref{S5} by adding the achiral figure eight knot, $4_1$, to all edges containing vertex 5. Now every homeomorphism of $(S^3, \Gamma')$ fixes vertex 5, yet all transpositions fixing vertex 5 are still possible.  Thus $\TSG(\Gamma') \cong \S_4$.

In order to prove $\D_4$ is realizable for $K_5$ consider the embedding $\Gamma$ illustrated in Figure \ref{K5D4}. Every homeomorphism of $(S^3, \Gamma)$ takes $\overline{1234}$ to itself, so $\TSG(\Gamma) \leq \D_4$. The automorphism $(1 2 34)$ is induced by rotating the graph by $90^{\circ}$ about a vertical line through vertex $5$,  then reflecting in the plane containing the vertices $1, 2, 3, 4$, and finally isotoping the knots into position.  Furthermore, reflecting in the plane containing $\overline{153}$ or $\overline{2 5 4}$ and then isotoping the knots into position yields the transposition $(2 4)$ or $(1 3)$ respectively. Hence $\TSG(\Gamma) \cong \D_4$.

\begin{figure}[h!]
\begin{center}
\includegraphics[height=4.3cm]{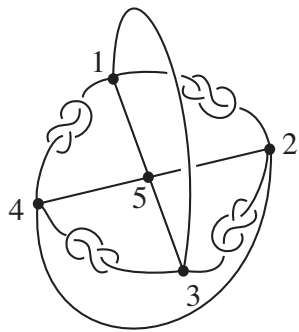}
\caption{$\TSG(\Gamma) \cong \D_4$.}
\label{K5D4}
\end{center}
\end{figure}

We obtain a new embedding $\Gamma'$ by replacing the invertible $4_1$ knots in Figure \ref{K5D4} with the knot $12_{427}$, which is positive achiral but non-invertible \cite{KnotS}. Since $12_{427}$ is neither negative achiral nor invertible, no homeomorphism of $(S^3,\Gamma')$ can invert $\overline{1234}$. Thus $\TSG(\Gamma') \cong \Z_4$.

Next let $\Gamma$ denote the embedding of $K_5$ illustrated in Figure \ref{K5D6}.  Every homeomorphism of $(S^3, \Gamma)$ takes $\overline{123}$ to itself, so $\TSG(\Gamma) \leq \D_6$.  The 3-cycle $(123)$ is induced by a rotation.  Each transposition involving only vertices 1, 2, and 3 is induced by a reflection in the plane containing $\overline{45}$ and the remaining fixed vertex followed by an isotopy.  The transposition $(4 5)$ is induced by a reflection in the plane containing vertices 1, 2 and 3 followed by an isotopy. Thus $\TSG(\Gamma) \cong \D_6$, generated by $(1 2 3)$, $(23)$, and $(4 5)$.

\begin{figure}[h!]
\begin{center}
\includegraphics[height=4.8cm]{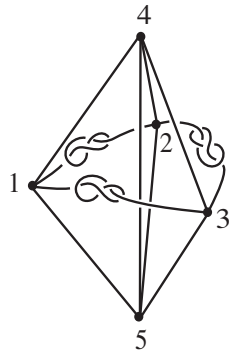}
\caption{$\TSG(\Gamma) \cong \D_6$.}
\label{K5D6}
\end{center}
\end{figure}

We obtain a new embedding $\Gamma'$ by replacing the $4_1$ knots in Figure \ref{K5D6} by $12_{427}$ knots.  Then the triangle $\overline{123}$ cannot be inverted.  Thus $\TSG(\Gamma') \cong \Z_6$, generated by $(1 2 3)$ and $(4 5)$.

It is more difficult to show that $\Z_5 \rtimes \Z_4$ is realizable for $K_5$, so we define our embedding in two steps.  First we create an embedding $\Gamma$ of $K_5$ on the surface of a torus $T$ that is standardly embedded in $S^3$. In Figure $\ref{FIG1}$, we illustrate $\Gamma$ on a flat torus.  Let $f$ denote a glide rotation of $S^3$ which rotates the torus longitudinally by $4\pi/5$ and while rotating it meridinally by $8\pi/5$. Thus $f$ takes $\Gamma$ to itself inducing the automorphism $(12345)$.

\begin{figure}[h!]
\begin{center}
\includegraphics[height=5cm]{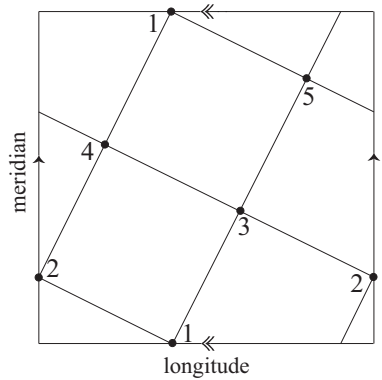}
\caption{The embedding $\Gamma$ of $K_5$ in a torus.}
\label{FIG1}
\end{center}
\end{figure}

Let $g$ denote the homeomorphism obtained by rotating $S^3$ about a $(1,1)$ curve on the torus $T$, followed by a reflection through a sphere meeting $T$ in two longitudes, and then a meridional rotation of $T$ by $6\pi/5$.  In Figure $\ref{FIG2}$, we illustrate the step-by-step action of $g$ on $T$, showing that $g$ takes $\Gamma$ to itself inducing $(2431)$.

\begin{figure}[h!]
\begin{center}
\includegraphics[height=4.7cm]{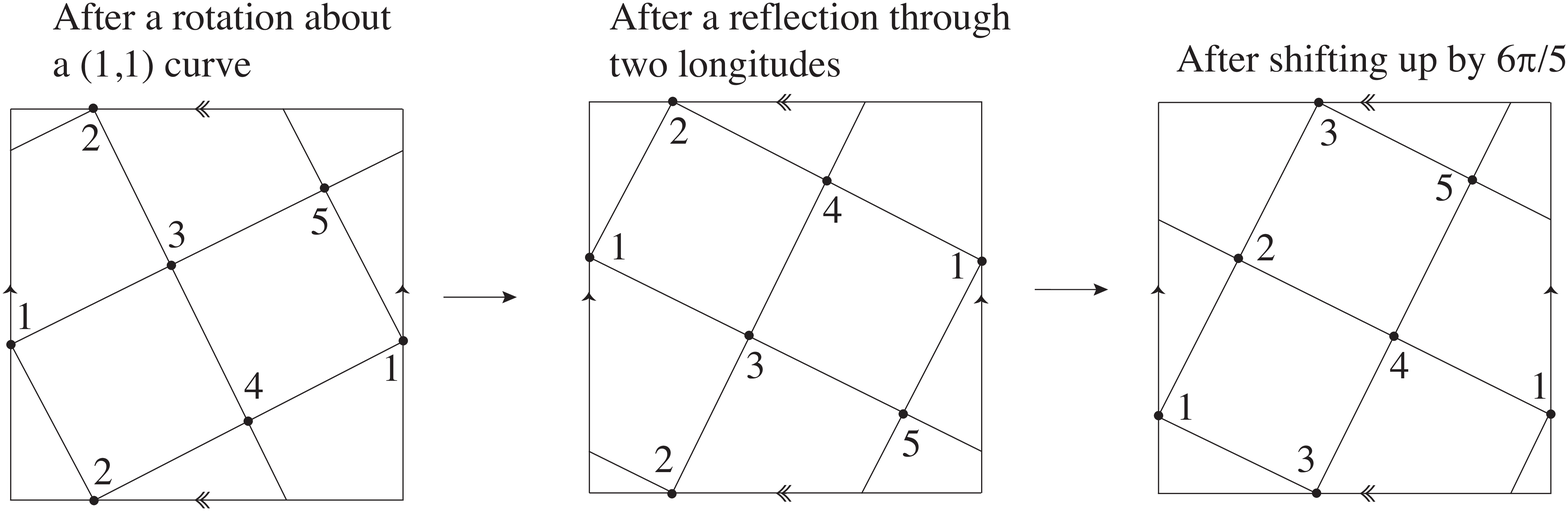}
\caption{The action of $g$ on $\Gamma$}
\label{FIG2}
\end{center}
\end{figure}

The homeomorphisms $f$ and $g$ induce the automorphisms $\phi=(1 2 3 4 5)$ and $\psi=(2 4 3 1)$ respectively.  Observe that $\phi^5 =\psi^4 =1$ and $\psi \phi=\phi \psi^2$. Thus $\langle \phi, \psi \rangle \cong\Z_5 \rtimes \Z_4 \leq \TSG(\Gamma) \leq \S_5$. Note however that the embedding in Figure $\ref{FIG1}$ is isotopic to the embedding of $K_5$ in Figure $\ref{S5}$. Thus $\TSG(\Gamma) \cong \S_5$.

In order to obtain the group $\Z_5 \rtimes \Z_4$, we now consider the embedding $\Gamma'$ of $K_5$ whose projection on a torus is illustrated in Figure \ref{TorFig8}. Observe that the projection of $\Gamma'$ in every square of the grid given by $\Gamma$ on the torus is identical.  Thus the homeomorphism $f$ which took $\Gamma$ to itself inducing the automorphism $\phi=(12345)$ on $\Gamma$ also takes $\Gamma'$ to itself inducing $\phi$ on $\Gamma'$.

\begin{figure}[h!]
\begin{center}
\includegraphics[height=5cm]{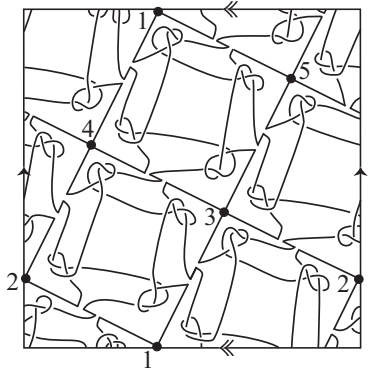}
\caption{Projection of $\Gamma'$ on the torus.}
\label{TorFig8}
\end{center}
\end{figure}

Recall that $g$ was the homeomorphism of $(S^3, \Gamma)$ obtained by rotating $S^3$ about a $(1,1)$ curve on the torus $T$, followed by a reflection through a sphere meeting $T$ in two longitudes, and then a meridional rotation of $T$ by $6\pi/5$. In order to see what $g$ does to $\Gamma'$, we focus on the  square $\overline{1534}$ of $\Gamma'$.   Figure \ref{psi_stages} illustrates a rotation of the square $\overline{1534}$ about a diagonal, then a reflection of the square across a longitude.  The result of these two actions takes the projection of the knot $\overline{1534}$ to an identical projection. Thus after rotating the torus meridionally by $6\pi/5$, we see that $g$ takes $\Gamma'$ to itself inducing the automorphism $\psi=(2 4 3 1)$. It now follows that $\Z_5 \rtimes \Z_4 \leq \TSG(\Gamma') \leq S_5$. 

\begin{figure}[h!]
\begin{center}
\includegraphics[height=3.9cm]{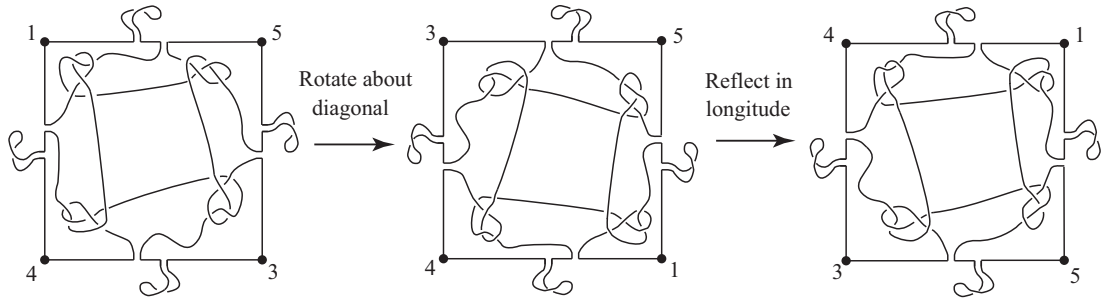}
\caption{Effect of $g$ on the square $\overline{1534}$}
\label{psi_stages}
\end{center}
\end{figure}

In order to prove that $\TSG(\Gamma') \cong \Z_5 \rtimes \Z_4$, we need to show $\TSG(\Gamma') \not \cong  \S_5$.  We  prove this by showing that the automorphism $(15)$ cannot be induced by a homeomorphism of $(S^3, \Gamma')$. 

From Figure \ref{psi_stages} we see that the square $\overline{1534}$ is the knot $4_1 \# 4_1 \# 4_1 \# 4_1$.  In order to see what would happen to this knot if the transposition $(15)$ were induced by a homeomorphism of $(S^3, \Gamma')$, we consider the square $\overline{5134}$. In Figures \ref{the_square_5134_p1} and  \ref{the_square_5134_p2} we isotop $\overline{5134}$ to a projection with only 10 crossings.  This means that $\overline{5134}$ cannot be the knot $4_1 \# 4_1 \# 4_1 \# 4_1$.  It follows that the automorphism $(15)$ cannot be induced by a homeomorphism of $(S^3, \Gamma')$.  Hence $\TSG(\Gamma')\not \cong S_5$.  However, the only subgroup of $\S_5$ that contains $\Z_5 \rtimes \Z_4$ and is not $S_5$ is the group $\Z_5 \rtimes \Z_4$. Thus in fact $\TSG(\Gamma') \cong \Z_5 \rtimes \Z_4$.

\begin{figure}[h!]
\begin{center}
\includegraphics[height=4.5cm]{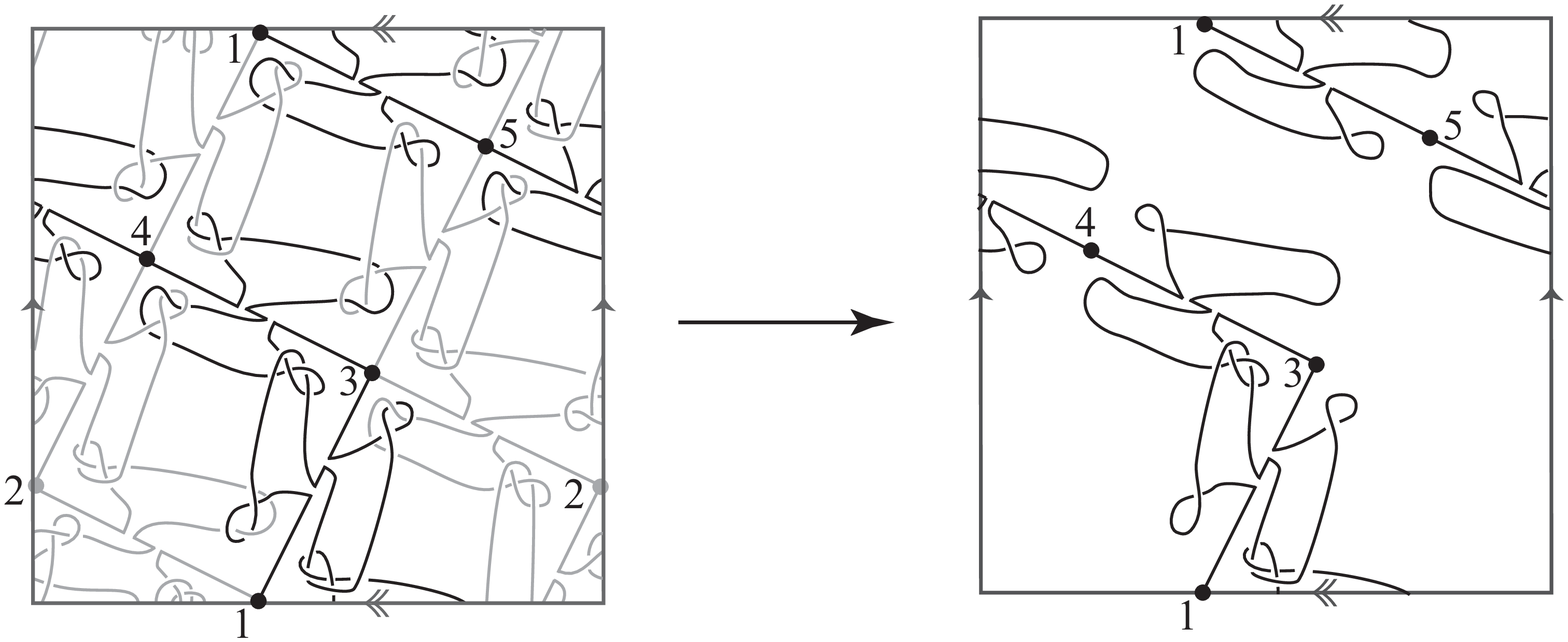}
\caption{The knot $\overline{5134}$ projected on the torus.}
\label{the_square_5134_p1}
\end{center}
\end{figure}

\begin{figure}[h!]
\begin{center}
\includegraphics[height=4.5cm]{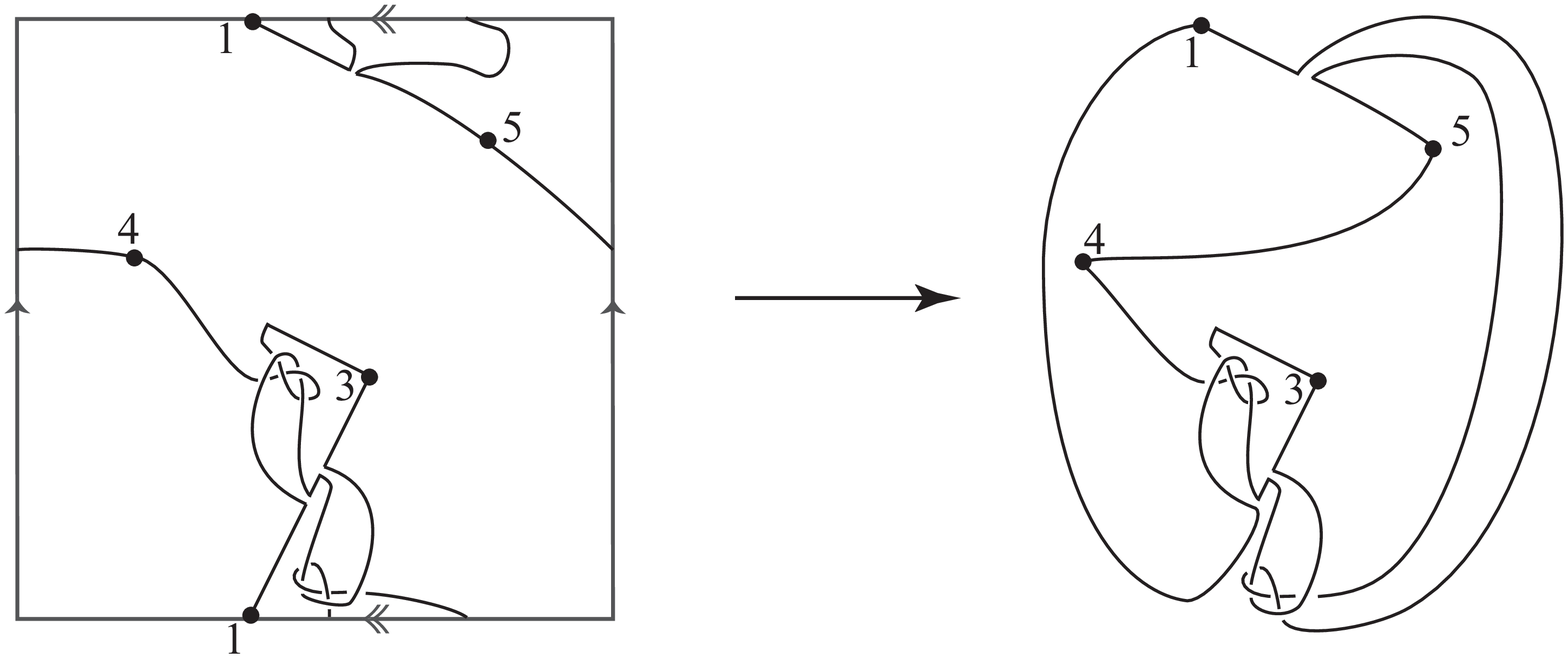}
\caption{A projection of $\overline{5134}$ on the torus after an isotopy, followed by a projection of $\overline{5134}$ on a plane.}
\label{the_square_5134_p2}
\end{center}
\end{figure}

 Thus every subgroup of $\mathrm{Aut}(K_5)$ is realizable for $K_5$.  Table \ref{table4} summarizes our results for $\TSG(K_5)$. 

\begin{table}[h!]
\centering
\begin{tabular}{| l | l | p{8cm} |}
\hline
{\bf Subgroup} & {\bf Realizable} & {\bf Reason} \\ \hline
$\S_5$ & Yes & By Figure \ref{S5}  \\ \hline
$A_5$ & Yes & Positively realizable \\ \hline
$\S_4$ & Yes & By modifying Figure \ref{S5}  \\ \hline
$A_4$ & Yes & Positively realizable \\ \hline
$\D_6$ & Yes & By Figure \ref{K5D6} \\ \hline
$\D_5$ & Yes & Positively realizable \\ \hline
$\D_4$ & Yes & By Figure \ref{K5D4} \\ \hline
$\D_3$ & Yes & Positively realizable \\ \hline
$\D_2$ & Yes & Positively realizable \\ \hline
$\Z_6$ & Yes & By modifying Figure \ref{K5D6} \\ \hline
$\Z_5 \rtimes \Z_4$ & Yes & By Figure \ref{TorFig8}  \\ \hline
$\Z_5$ & Yes & Positively realizable \\ \hline
$\Z_4$ & Yes & By modifying Figure \ref{K5D4}  \\ \hline
$\Z_3$ & Yes & Positively realizable \\ \hline
$\Z_2$ & Yes & Positively realizable \\ \hline
\hline
\end{tabular}

\caption{Non-trivial realizable groups for $K_5$}
\label{table4}
\end{table}

\bigskip

\section{\bf{Topological Symmetry Groups of $\mathbf{K_6}$}}

The following is a complete list of all the subgroups of $\Aut(K_6) \cong \S_6$:

\noindent $\S_6$, $A_6$, $\S_5$, $A_5$, $S_2[S_3]$\footnote[1]{$B[A]$ represents a wreath product of $A$ by $B$.}, $\S_4 \times \Z_2$, $A_4 \times \Z_2$, $\S_4$, $A_4$, $\Z_5 \rtimes \Z_4$, $\D_3 \times \D_3$, $(\Z_3 \times \Z_3) \rtimes \Z_4$, $(\Z_3 \times \Z_3) \rtimes \Z_2$, $\D_3 \times \Z_3$, $\Z_3 \times \Z_3$, $\D_6$, $\D_5$, $\D_4$, $\D_4 \times \Z_2$, $\D_3$, $\D_2$, $\Z_6$, $\Z_5$, $\Z_4$, $\Z_4 \times \Z_2$, $\Z_3$, $\Z_2$, $\Z_2 \times \Z_2 \times \Z_2$  (see \cite{gotz6} and independently verified using the program GAP).

\begin{figure}[h]
\begin{center}
\includegraphics[height=5.2cm]{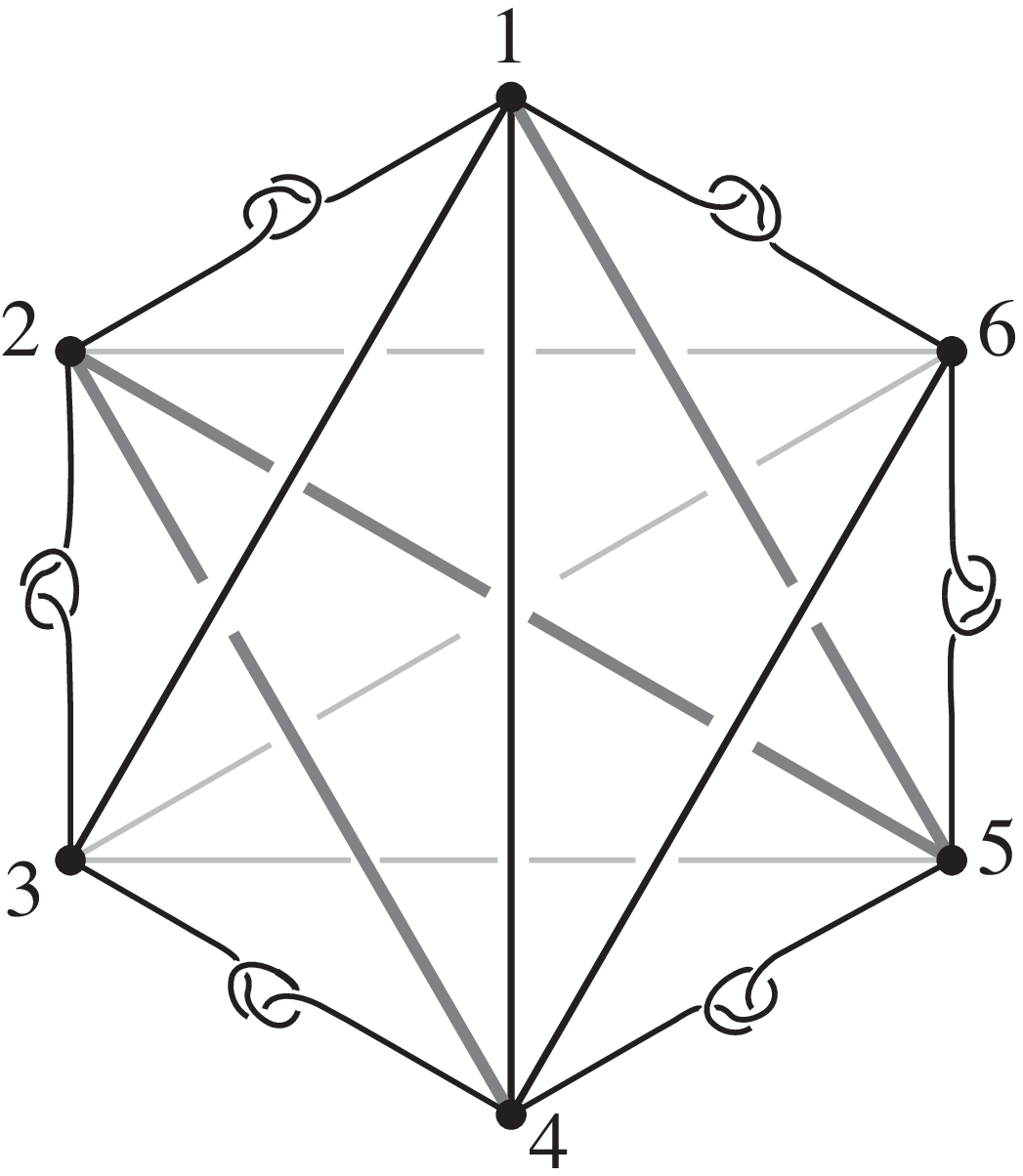}
\caption{$\TSG_+(\Gamma) \cong \D_6$.}
\label{budshade}
\end{center}
\end{figure}

Consider the embedding $\Gamma$ of $K_6$ illustrated in Figure \ref{budshade}. There are three paths in $\Gamma$ on different levels that look like the letter ``Z" which are highlighted in Figure~\ref{budshade}.  The top Z-path is $\overline{3146}$, the middle Z-path is $\overline{4251}$, and the bottom Z-path is $\overline{5362}$. The knotted cycle $\overline{123456}$ must be setwise invariant under every homeomorphism of $\Gamma$, and hence $\TSG_+(\Gamma)\leq D_6$. The automorphism $(123456)$ is induced by a glide rotation that cyclically permutes the Z-paths. Consider the homeomorphism obtained by rotating $\Gamma$ by $180^\circ$ about the line through vertices 2 and 5 and then pulling the edges $\overline{13}$ and $\overline{46}$ to the top level while pushing the lower edges down. The result of this homeomorphism is that the top Z-path  $\overline{3146}$ goes to the top Z-path $\overline{1364}$, the middle Z-path $\overline{4251}$ goes to to middle Z-path $\overline{6253}$, and the bottom Z-path $\overline{5362}$ goes to the bottom Z-path $\overline{5142}$.  Thus the homeomorphism leaves $\Gamma$ setwise invariant inducing the automorphism $(13)(46)$.  It follows that $\TSG_+(\Gamma)=\langle (123456),(13)(46) \rangle \cong \D_6$.  Finally, since the edge $\overline{12}$ is not pointwise fixed by any non-trivial element of $\TSG_+(\Gamma)$, by the Subgroup Theorem the groups $\Z_6$, $\D_3$, $\Z_3$, $\D_2$ and $\Z_2$ are positively realizable for $K_6$.

Consider the embedding, $\Gamma$ of $K_6$ illustrated in Figure \ref{K6D5} with vertex 6 at infinity. The automorphisms $(13524)$ and $(25)(34)$ are induced by rotations.  Also since $\overline{13524}$ is the only 5-cycle which is knotted, $\overline{13524}$ is setwise invariant under every homeomorphism of $(S^3, \Gamma)$. Hence $\TSG_+(\Gamma) \cong \D_5$. Also since $\overline{15}$ is not pointwise fixed under any homeomorphism, by the Subgroup Theorem, $\Z_5$ is positively realizable for $K_6$.

\begin{figure}[h]
\begin{center}
\includegraphics[height=4cm]{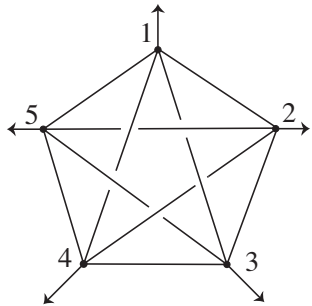}
\caption{$\TSG_+(\Gamma) \cong \D_5$.}
\label{K6D5}
\end{center}
\end{figure}

Next consider the embedding, $\Gamma$ of $K_6$ illustrated in Figure \ref{D3}. The automorphisms $(1 2 3)(4 5 6)$ and $(1 2 3)(4 6 5)$ are induced by glide rotations and $(4 5)(12)$ is induced by turning the figure upside down.  Also if we consider the circles $\overline{123}$ and $\overline{465}$ as cores of complementary solid tori, then $(14)(25)(36)$ is induced by an orientation preserving homeomorphism that switches the two solid tori.

\begin{figure}[h]
\begin{center}
\includegraphics[height=5.2cm]{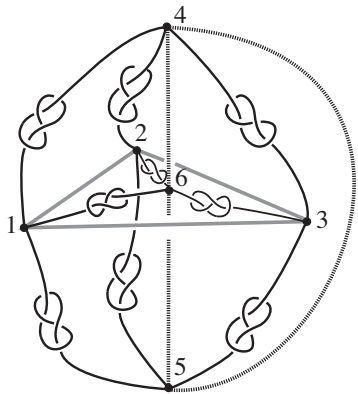}
\caption{$\TSG_+(\Gamma) \cong \D_3 \times \D_3$.}
\label{D3}
\end{center}
\end{figure}

Observe that every homeomorphism of $(S^3, \Gamma)$ takes the pair of triangles $\overline{123}\cup\overline{456}$ to itself, since this is the only pair of complementary triangles not containing knots.  The automorphism group of the union of two triangles is $S_2[S_3]$ \cite{Fr}. Thus $\TSG_+(\Gamma)\leq S_2[S_3]$.  Note that the transpositions $(12)$ and $(45)$ are each induced by a reflection followed by an isotopy. Thus $\TSG(\Gamma) \cong S_2[S_3]$, since $(123)(456)$, $(123)(465)$, $(12)$ and $(14)(25)(36)$ generate $S_2[S_3]$. However, by the Complete Graph Theorem, $\TSG_+(\Gamma) \not\cong S_2[S_3]$.  Thus $\TSG_+(\Gamma)$ must be an index 2 subgroup of $S_2[S_3]$ containing $f=(123)(456)$, $g=(123)(465)$, $\phi = (45)(12)$ and $\psi = (14)(25)(36)$.  Observe that $\phi\psi$ is the involution $(42)(51)(36)$, and $f$ commutes with $\psi$ and also $f \phi \psi = \phi \psi f^{-1}$, while $g$ commutes with $\phi \psi$ and $g \psi =\psi g^{-1}$. Thus $S_2[S_3] \gneq \TSG_+(\Gamma) \geq \langle f,\phi\psi\rangle\times \langle g, \psi\rangle \cong \D_3 \times \D_3$.  It follows that $\TSG_+(\Gamma)\cong  \D_3 \times \D_3$.

The subgroup $\langle f, g, \psi \rangle$ is isomorphic to $\D_3 \times \Z_3$ because $\psi$ commutes with $f$ and $g\psi = \psi g^{-1}$.  We add the non-invertible knot $8_{17}$ to every edge of the triangles $\overline{123}$ and $\overline{456}$ to obtain an embedding $\Gamma_1$. Now the automorphism $\phi = (45)(12)$ cannot be induced by an orientation preserving homeomorphism of $(S^3, \Gamma_1)$.  However, $f$, $g$, and $\psi$ are still induced by orientation preserving homeomorphisms.  Thus $\TSG_+(\Gamma_1) \cong \D_3\times \Z_3$ since $\D_3\times \Z_3$ is a maximal subgroup of $\D_3 \times \D_3$.

Also $\langle f, g, \phi \rangle$ is isomorphic to $ (\Z_3 \times \Z_3) \rtimes \Z_2$ because $f\phi =\phi f^{-1}$ and $g\phi = \phi g^{-1}$. Again starting with $\Gamma$ in Figure \ref{D3}, we place  $5_2$ knots on the edges of the triangle $\overline{123}$ so that $\psi$ is no longer induced. Thus creating and embedding $\Gamma_2$ with $\TSG_+(\Gamma_2) \cong (\Z_3 \times \Z_3) \rtimes \Z_2$ since $(\Z_3 \times \Z_3) \rtimes \Z_2$ is a maximal subgroup of $\D_3 \times \D_3$.

Finally $\langle f,g \rangle$ is isomorphic to $ \Z_3 \times \Z_3$. If we place equivalent non-invertible knots on each edge of the triangle $\overline{123}$ and a another set (distinct from the first set) of equivalent non-invertible knots on each edge of $\overline{456}$ we obtain an embedding $\Gamma_3$ with $\TSG_+(\Gamma_3) \cong \Z_3\times \Z_3$ since $\Z_3 \times \Z_3$ is a maximal subgroup of $(\Z_3 \times \Z_3) \rtimes \Z_2$.

\medskip

We summarize our results on positively realizable for $K_6$ in Table \ref{table5}.

\begin{table}[h!]
\centering
\begin{tabular}{| l | l | p{6cm} |}
\hline
{\bf Subgroup} & {\bf Positively Realizable} & {\bf Reason} \\ \hline
$A_5$ & No & By $A_5$ Theorem \\ \hline
$\S_4$ & No & By $\S_4$ Theorem \\ \hline
$A_4$ & No & By $A_4$ Theorem \\ \hline
$\D_6$ & Yes & By Figure \ref{budshade}  \\ \hline
$\D_5$ & Yes & By Figure \ref{K6D5}\\ \hline
$\D_4$ & No & By Lemma 2\\ \hline
$\D_3 \times \D_3$ & Yes & By Figure \ref{D3} \\ \hline
$\D_3 \times \Z_3$ & Yes & By modifying Figure \ref{D3} \\ \hline
$\D_3$ & Yes & By Subgroup Theorem \\ \hline
$\D_2$ & Yes & By Subgroup Theorem \\ \hline
$\Z_6$ & Yes & By Subgroup Theorem \\ \hline
$\Z_5$ & Yes & By Subgroup Theorem \\ \hline
$\Z_4$ & No & By Lemma 2 \\ \hline
$(\Z_3 \times \Z_3) \rtimes \Z_2$ & Yes & By modifying Figure \ref{D3} \\ \hline
$\Z_3 \times \Z_3$ & Yes & By modifying Figure \ref{D3} \\ \hline
$\Z_3$ & Yes & By Subgroup Theorem \\ \hline
$\Z_2$ & Yes & By Subgroup Theorem \\ \hline
\hline
\end{tabular}

\caption{Non-trivial positively realizable groups for $K_6$}
\label{table5}
\end{table}

By adding appropriate equivalent chiral knots to each edge, every group which is positively realizable for $K_6$ is also realizable for $K_6$.  Thus we only need to determine reliability for the groups $\S_6$, $A_6$, $\S_5$, $A_5$, $\S_4 \times \Z_2$, $A_4 \times \Z_2$, $\S_4$, $A_4$, $\Z_5 \rtimes \Z_4$, $(\Z_3 \times \Z_3) \rtimes \Z_4$, $\D_4$, $\D_4 \times \Z_2$, $\Z_4$, $\Z_4 \times \Z_2$, and $\Z_2 \times \Z_2 \times \Z_2$. Note that in Figure~\ref{D3} we already determined that $S_2[S_3]$ is realizable for $K_6$.

Let $\Gamma_4$ be the embedding of $K_6$ illustrated in Figure \ref{D3} with a left handed trefoil added to each edge of $\overline{123}$ and a right handed trefoil added to each edge of $\overline{456}$. The pair of triangles are setwise invariant since no other edges contain trefoils.  Both $(123)(456)$ and  $(123)(465)$ are induced by homeomorphisms of $(\Gamma_4, S^3)$.  Also if we reflect in the plane containing vertices 4, 5, 6, and 1 then all the trefoils switch from left-handed to right-handed and vice versa.  If we then interchange the complementary solid tori which have the triangles as cores followed by an isotopy, we obtain an orientation reversing homeomorphism that induces the order 4 automorphism $(14)(25)(36)(23) =(14)(2536)$. Now $\langle (14)(2536),(123)(456),(123)(465)\rangle\cong (\Z_3 \times \Z_3) \rtimes \Z_4$.  

We see as follows that $\TSG(\Gamma_4)$ cannot be larger than $(\Z_3 \times \Z_3) \rtimes \Z_4$. Suppose that the automorphism $(1 2)$ is induced by a homeomorphism $f$.  By Lemma 1, $f$ must be orientation reversing. But $f(\overline{456}) = \overline{456}$, which is impossible because $\overline{456}$ contains only right handed trefoils.  Thus $ \TSG(\Gamma_4)\not \cong S_2[S_3]$.  Note that the only proper subgroup of $S_2[S_3]$ containing $(\Z_3 \times \Z_3) \rtimes \Z_4$ is $(\Z_3 \times \Z_3) \rtimes \Z_4$.  Thus $\TSG(\Gamma_4) \cong (\Z_3 \times \Z_3) \rtimes \Z_4$.

Now let $\Gamma$ be the embedding of $K_6$ illustrated in Figure \ref{D4}. Observe that the linking number $\lk(\overline{135}, \overline{246}) = \pm 1$, but $\lk(\overline{136}, \overline{245}) = 0$. Thus the automorphism $(56)$ cannot be induced by a homeomorphism of $(S^3, \Gamma)$. Since every homeomorphism of $(S^3, \Gamma)$ takes $\overline{1234}$ to itself, it follows that $\TSG(\Gamma) \leq \D_4$.  The automorphism $(1 2 3 4)(5 6)$ is induced by a rotation followed by a reflection and an isotopy.  In addition the automorphism $(1 4)(2 3)(5 6)$ is induced by turning the figure upside down. Thus $\TSG(\Gamma) \cong \D_4$ generated by the automorphisms $(1 2 3 4)(5 6)$ and $(1 4)(2 3)(5 6)$.

\begin{figure}[h!]
\begin{center}
\includegraphics[height=5cm]{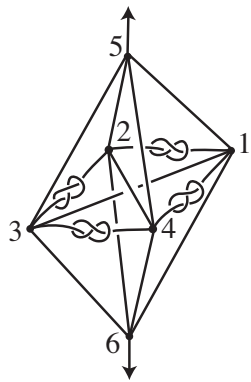}
\caption{$\TSG(\Gamma) \cong \D_4$.}
\label{D4}
\end{center}
\end{figure}

Now let $\Gamma'$ be obtained from Figure \ref{D4} by replacing the knot $4_1$ with the non-invertible and positively achiral knot $12_{427}$. Then the square $\overline{1234}$ can no longer be inverted. In this case $(1 2 3 4)(5 6)$ generates $\TSG(\Gamma')$ and thus $\TSG(\Gamma') \cong \Z_4$.
\medskip

For the next few groups we will use the following lemma.

\begin{N4Cycle}
\cite{MOB} For any embedding $\Gamma$ of $K_6$ in $S^3$, and any labelling of the vertices of $K_6$ by the numbers 1 through 6, there is no homeomorphism of $(S^3, \Gamma)$ which induces the automorphism $(1234)$.
\end{N4Cycle}

Consider the subgroup $\Z_5 \rtimes \Z_4\leq \mathrm{Aut}(K_6)$.  The presentation of $\Z_5 \rtimes \Z_4$ as a subgroup of $S_6$ gives the relation $x^{-1}yx = y^2$ for some elements $x$, $y\in\Z_5 \rtimes \Z_4$ of orders 4 and 5 respectively.  Suppose that for some embedding $\Gamma$ of $K_6$, we have $\TSG(\Gamma) \cong \Z_5 \rtimes \Z_4$.  Without loss of generality, we can assume that  $y=(1 2 3 4 5)$ satisfies the relation $x^{-1}yx = y^2$.  By the 4-Cycle Theorem, any order 4 element of $\TSG(\Gamma)$ must be of the form $x=(a b c d)(e f)$.  However, there is no element in $\mathrm{Aut}(K_6)$ of the form $x=(a b c d)(e f)$ that together with $y=(12345)$ satisfies this relation. Thus there can be no embedding $\Gamma$ of $K_6$ in $S^3$ such that $\TSG(\Gamma) \cong \Z_5 \rtimes \Z_4$.

Now consider the subgroup $\Z_4 \times \Z_2\leq \mathrm{Aut}(K_6)$. By the 4-Cycle Theorem, without loss of generality we may assume that if $\TSG(\Gamma)$ contains an element of order 4 for some embedding $\Gamma$ of $K_6$, then  $\TSG(\Gamma)$ contains the element $(1234)(56)$. Computation shows that the only transposition in $ \mathrm{Aut}(K_6)$ that commutes with $(1234)(56)$ is $(56)$, which cannot be an element of $\TSG(\Gamma)$ since this would imply that $(1234)$ is an element of $\TSG(\Gamma)$.  Furthermore the only order 2 element of  $\mathrm{Aut}(K_6)$  that commutes with $(1234)(56)$ and is not a transposition is $(13)(24)$, which is already in the group generated by $(1234)(56)$. Thus there is no embedding $\Gamma$ of $K_6$ in $S^3$ such that $\TSG(\Gamma)$ contains the group $\Z_4 \times \Z_2$.  This rules out all of the groups $\S_4 \times \Z_2$, $\D_4 \times \Z_2$ and $\Z_4 \times \Z_2$ as possible topological symmetry groups for embeddings of $K_6$ in $S^3$.
\medskip

For the group $\Z_2 \times \Z_2 \times \Z_2$ we will use the following result.

\begin{CnG} \cite{ConGor} For any embedding $\Gamma$ of $K_6$ in $S^3$, the mod 2 sum of the linking numbers of all pairs of complementary triangles in $\Gamma$ is 1. 
\end{CnG}

Now suppose that for some embedding $\Gamma$ of $K_6$ in $S^3$ we have $\TSG(\Gamma) \cong  \Z_2 \times \Z_2 \times \Z_2$. It can be shown that the subgroup $\Z_2 \times \Z_2 \times \Z_2\leq \mathrm{Aut}(K_6)$ contains three disjoint transpositions.  Without loss of generality we can assume that $\TSG(\Gamma)$ contains $(13)$, $(24)$, and $(56)$, which are induced by homeomorphisms $h$, $f$, and $g$ of $(S^3,\Gamma)$ respectively. Since any three vertices of $\Gamma$ determine a pair of disjoint triangles, we can use a triple of vertices to represent a pair of disjoint triangles. For example, we use the triple $123$ to denote the pair of triangles $\overline{123}$ and $\overline{456}$. With this notation, the orbits of the ten pairs of disjoint triangles in $K_6$ under the group $\langle (13), (24), (56) \rangle$ are:

\medskip

\centerline{$\{ 123, 143 \}$, $\{ 124, 324 \}$, $\{ 125, 325, 145, 126 \}$, $\{ 135, 136 \}$}

\medskip

Since $h$, $f$, and $g$ are homeomorphisms of $(S^3, \Gamma)$ the links in a given orbit all have the same (mod 2) linking number.  Since each of these orbits has an even number of pair of triangles, this contradicts Conway Gordon.  Thus $\Z_2 \times \Z_2 \times \Z_2\not \cong \TSG(\Gamma)$.  Hence $\Z_2 \times \Z_2 \times \Z_2$ is not realizable for $K_6$

\medskip

Table \ref{table6} summarizes our realizability results for $K_6$. Recall that for $n=4$ and $n=5$ every subgroup of $S_n$ is realizable for $K_n$.  However, as we see from Table \ref{table6}, this is not true for $n=6$.

\begin{table}[h!]
\centering
\begin{tabular}{| l | l | p{9.5cm} |}
\hline
{\bf Subgroup} & {\bf Realizable} & {\bf Reason} \\ \hline
$\S_6$ & No & $\TSG_+(K_6)$ cannot be $\S_6$ or $A_6$  \\ \hline
$A_6$ & No & $\TSG_+(K_6)$ cannot be $A_6$ \\ \hline
$\S_5$ & No & $\TSG_+(K_6)$ cannot be $\S_5$ or $A_5$ \\ \hline
$A_5$ & No & $\TSG_+(K_6)$ cannot be $A_5$ \\ \hline
$\S_4 \times \Z_2$ & No & $\TSG_+(K_6)$ cannot be $\S_4 \times \Z_2$ or $\S_4$ \\ \hline
$\S_4$ & No & $\TSG_+(K_6)$ cannot be $\S_4$ or $A_4$ \\ \hline
$A_4 \times \Z_2$ & No & $\TSG_+(K_6)$ cannot be $A_4 \times \Z_2$ or $A_4$ \\ \hline
$A_4$ & No & $\TSG_+(K_6)$ cannot be $A_4$ \\ \hline
$\D_6$ & Yes & Positively realizable \\ \hline
$\D_5$ & Yes & Positively realizable \\ \hline
$\D_4 \times \Z_2$ & No & $\TSG_+(K_6)$ cannot be $\D_4 \times \Z_2$, $\D_4$, $\Z_4 \times \Z_2$, $\Z_2 \times \Z_2 \times \Z_2$  \\ \hline
$\D_4$ & Yes & By Figure \ref{D4}  \\ \hline
$S_2[S_3]$ & Yes & By Figure \ref{D3} \\ \hline
$\D_3 \times \D_3$ & Yes & Positively realizable \\ \hline
$\D_3 \times \Z_3$ & Yes & Positively realizable \\ \hline
$\D_3$ & Yes & Positively realizable \\ \hline
$\D_2$ & Yes & Positively realizable \\ \hline
$\Z_6$ & Yes & Positively realizable \\ \hline
$\Z_5 \rtimes \Z_4$ & No & By 4-Cycle Theorem\\ \hline
$\Z_5$ & Yes & Positively realizable \\ \hline
$\Z_4 \times \Z_2$ & No &  By 4-Cycle Theorem \\ \hline
$\Z_4$ & Yes & By modifying Figure \ref{D4}\\ \hline
$(\Z_3 \times \Z_3) \rtimes \Z_4$ & Yes & By modifying Figure \ref{D3}\\ \hline
$(\Z_3 \times \Z_3) \rtimes \Z_2$ & Yes & Positively realizable \\ \hline
$\Z_3 \times \Z_3$ & Yes & Positively realizable \\ \hline
$\Z_3$ & Yes & Positively realizable \\ \hline
$\Z_2 \times \Z_2 \times \Z_2$ & No & By  Conway Gordon Theorem \\ \hline
$\Z_2$ & Yes & Positively realizable \\ \hline
\hline
\end{tabular}

\caption{Non-trivial realizable groups for $K_6$}
\label{table6}
\end{table}





\newpage

\acknowledgements{Acknowledgements}

The first author would like to thank Claremont Graduate University for its support while he pursued the study of Topological Symmetry Groups for his Ph.D thesis.
The second author would like to thank the Institute for Mathematics and its Applications at the University of Minnesota for its hospitality while she was a long term visitor in the fall of 2013.

\bibliographystyle{mdpi}
\makeatletter
\renewcommand\@biblabel[1]{#1. }
\makeatother


\end{document}